\newtheorem*{thm}{Theorem}
\newtheorem{lemma}{Lemma}
\newtheorem*{corollary}{Corollary}
\newtheorem*{proposition}{Proposition}
\begin{document}

\title[well-distributed great circles on {\small $\mathbb{S}^2$}]{ Well-distributed great circles on {\Large $\mathbb{S}^2$}}

\author[]{Stefan Steinerberger}
\address[Stefan Steinerberger]{Department of Mathematics, Yale University, 06511 New Haven, CT, USA}
\email{stefan.steinerberger@yale.edu}

\begin{abstract} Let $C_1, \dots, C_n$ denote the $1/n-$neighborhood of $n$ great circles on $\mathbb{S}^2$.
We are interested in how much these areas have to overlap and prove the sharp bounds
$$  \sum_{i, j = 1 \atop i \neq j}^{n}{|C_i \cap C_j|^s} \gtrsim_s  \begin{cases} n^{2 - 2s} \qquad &\mbox{if}~0 \leq s < 2 \\ n^{-2} \log{n} \qquad &\mbox{if}~s = 2\\ n^{1- 3s/2} \qquad &\mbox{if}~s > 2. \end{cases} .$$
For $s=1$ there are arrangements for which the sum of mutual overlap is uniformly bounded (for the analogous problem in $\mathbb{R}^2$ the lower bound is $\gtrsim \log{n}$) and there are strong connections to minimal energy configurations of $n$ charged electrons on $\mathbb{S}^2$ (the J. J. Thomson problem). 
\end{abstract}

\keywords{Intersection of great circles, Riesz energy, Thomson problem.}
\subjclass[2010]{31A15, 52C35 (primary), 52C10 (secondary)} 

\maketitle
\vspace{-20pt}

\section{Introduction and statement of results}
An old problem of Fejes-T\'oth \cite{fejes} is how to arrange $n$ great circles on $\mathbb{S}^2$ so as to minimize the largest distance between
any point on $\mathbb{S}^2$ and its nearest great circle. He conjectures the best arrangement to be one where all $n$ great circles pass through
the same point (the 'north pole') in an equal-angle arrangement. This has only been proven for $n=3$ \cite{rosta} and $n=4$ \cite{vier}.
The problem is motivated by the distribution of satellites in orbit (\cite{fejes} is titled 'Exploring a planet') and has a natural reformulation: assume
$\delta-$neighborhoods of $n$ great circles cover $\mathbb{S}^2$, how small can $\delta$ be? The Fejes-T\'oth's conjecture then states $\delta \geq \pi/(2n)$ (see
\cite{fodor} for known results).

\begin{center}
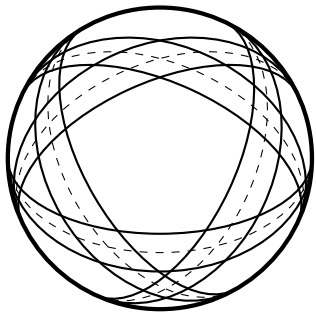
\begin{figure}[h!]
\begin{tikzpicture}
\draw[ultra thick] (0,0) circle (2) ;
\foreach \angle[count=\n from 1] in {0,65,155,205,300} {
    \begin{scope}[rotate=\angle]
    \path[thick, draw] (-2,0) arc [start angle=180,
                        end angle=360,
                        x radius=2cm,
                        y radius=1cm] ;
                            \path[dashed,draw] (-2,0) arc [start angle=180,
                        end angle=360,
                        x radius=2cm,
                        y radius=1.25cm] ;
                            \path[thick,draw] (-2,0) arc [start angle=180,
                        end angle=360,
                        x radius=2cm,
                        y radius=1.5cm] ;
    \end{scope}
    };
\end{tikzpicture}
\caption{Great circles, their $1/n-$neighborhoods and intersection pattern.}
\end{figure}
\end{center}
\vspace{-20pt}
We are interested in the extent to which neighborhoods of great circles have to overlap -- or, in the spirit of Fejes-T\'oth, how to position $n$ satellites so as to minimize the chances of collision. We will consider $n$ great circles on $\mathbb{S}^2$ and  
denote their $1/n-$neighborhoods by $C_1, \dots, C_n$ (the scaling $1/n$ is natural and simplifies exposition, however, our methods also apply to other widths; see below). Any two great circles intersect, so it is natural to ask to which extent the neighborhoods can manage to avoid each other. A natural measure for this is
$$ \sum_{i, j = 1 \atop i \neq j}^{n}{|C_i \cap C_j|} \qquad \mbox{which can be seen to be}~\gtrsim 1~\mbox{as follows:}$$
observe that any two great circles meet (creating $\sim n^2$ pairs of intersections) and $|C_i \cap C_j|$ is minimized if their great circles meet transversally which creates an area of $|C_i \cap C_j| \gtrsim n^{-2}$. One natural special case is the arrangement where all great circles meet in a point -- in that case the global geometry of $\mathbb{S}^2$ is of less importance and we may turn to the analogous problem in $\mathbb{R}^2$. 

\begin{proposition} Let $\ell_1, \dots, \ell_n$ be any set of $n$ lines in $\mathbb{R}^2$ such that any two lines intersect in some point and denote their $1/n-$neighborhoods by $T_1, \dots, T_n$. Let $s \geq 0$, then
there exists $c_s > 0$ such that
$$ \sum_{i, j = 1 \atop i \neq j}^{n}{|T_i \cap T_j|^s}   \geq c_s \begin{cases} n^{2 - 2s} \qquad &\mbox{if}~0 \leq s < 1 \\ \log{n} \qquad &\mbox{if}~s = 1\\ n^{1- s} \qquad &\mbox{if}~s > 1. \end{cases}.$$ 
\end{proposition}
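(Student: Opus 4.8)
The plan is to reduce the inequality to a one–dimensional Riesz–type energy estimate for the \emph{directions} of the lines, and then to bound that energy from below by a pigeonhole/Cauchy--Schwarz count of nearby pairs carried out on all scales at once; the extremal configuration, as one checks afterwards, is $n$ equally spaced directions. First I would record the geometry: each $T_i$ is an infinite strip of width $2/n$ about $\ell_i$, and if $\ell_i,\ell_j$ meet at angle $\theta_{ij}\in(0,\pi/2]$ then $T_i\cap T_j$ is a parallelogram of area $|T_i\cap T_j|=\frac{4}{n^2\sin\theta_{ij}}$. Since $\sin\theta\le\theta$ and $\sin\theta\le 1$ this is at least $\frac{4}{n^2\theta_{ij}}$ and at least $\frac{4}{n^2}$, and because $t\mapsto t^s$ is nondecreasing,
$$\sum_{i\neq j}|T_i\cap T_j|^s\ \ge\ \frac{4^s}{n^{2s}}\sum_{i\neq j}\theta_{ij}^{-s}$$
(with $\theta_{ij}^{0}:=1$, so that $s=0$ is simply the count $n(n-1)\gtrsim n^2$, every pair of strips overlapping in positive area). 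It then suffices to show that $\sum_{i\neq j}\theta_{ij}^{-s}$ is $\gtrsim_s n^{2}$, $\gtrsim n^{2}\log n$, $\gtrsim_s n^{1+s}$ in the three ranges $0<s<1$, $s=1$, $s>1$, since dividing by $n^{2s}$ then gives exactly the stated right–hand sides.

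For the energy bound I would regard the directions as $n$ points on the circle $\RR/\pi\ZZ$ of circumference $\pi$, on which the distance between the $i$th and $j$th point is precisely $\theta_{ij}\in[0,\pi/2]$. Given $0<r\le\pi$, cut this circle into $M=\lceil\pi/r\rceil$ consecutive arcs, each of length $\le r$; if arc $a$ carries $m_a$ points then any two of them lie within distance $r$, so the number $N(r):=\#\{(i,j):i\neq j,\ \theta_{ij}<r\}$ of ordered nearby pairs satisfies
$$N(r)\ \ge\ \sum_{a=1}^{M}m_a(m_a-1)\ =\ \sum_a m_a^2-n\ \ge\ \frac{n^2}{M}-n\ \ge\ \frac{n^2r}{2\pi}-n$$
by Cauchy--Schwarz; in particular $N(r)\ge\frac{n^2r}{4\pi}$ once $4\pi/n\le r\le\pi$.

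Next I would sum this over scales. Writing $\theta_{ij}^{-s}=s\int_{\theta_{ij}}^{\infty}r^{-s-1}\,dr$ and exchanging sum and integral,
$$\sum_{i\neq j}\theta_{ij}^{-s}\ =\ s\int_0^{\infty}r^{-s-1}N(r)\,dr\ \ge\ \frac{s\,n^2}{4\pi}\int_{4\pi/n}^{\pi/2}r^{-s}\,dr\qquad(n>8),$$
and the last integral is $\gtrsim_s 1$ for $0<s<1$, equals $\log(n/8)\gtrsim\log n$ for $s=1$, and is $\gtrsim_s n^{s-1}$ for $s>1$ — exactly the three orders required. The finitely many remaining small values of $n$ are absorbed into $c_s$: for $n\ge 2$ the left–hand side is always at least $2|T_1\cap T_2|^s\ge 2\cdot4^s n^{-2s}>0$, while the right–hand side is bounded, so $c_s$ may be shrunk accordingly.

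The geometry and the pigeonhole count are routine; the one delicate point, which I expect to be the crux, is that a single well–chosen scale $r$ in the count already produces the sharp order $n^{1+s}$ for $s>1$ and $n^2$ for $s<1$, but it does \emph{not} produce the logarithm at $s=1$. That gain appears only upon integrating the estimate $N(r)\gtrsim n^2 r$ over the full dyadic range $r\in[1/n,1]$, on which every scale contributes an equal amount $\asymp n^2$; so it is essential to keep the pigeonhole bound in the scale–sensitive form $N(r)\gtrsim n^2 r$ rather than replacing it by the weaker $N(r)\gtrsim n$, which would waste the small scales.
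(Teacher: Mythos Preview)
Your proof is correct, and it takes a genuinely different route from the paper's. Both begin with the same geometric reduction $|T_i\cap T_j|\ge \frac{4}{n^2\theta_{ij}}$, but diverge from there. The paper orders the $n$ directions on a circle, works with the consecutive gaps $h_i$, and proves a sharp inequality (Lemma~1) via the arithmetic--harmonic mean applied to sums of $k$ consecutive gaps; this handles $s=1$ with the exact constant $\frac{n^2}{\pi}H_{n-1}$. For $s>1$ it drops to adjacent pairs only and combines H\"older with another AM--HM step. Your argument instead treats all $s>0$ at once: you bound the pair-count function $N(r)$ by a Cauchy--Schwarz pigeonhole on $\lceil\pi/r\rceil$ arcs, then recover $\sum\theta_{ij}^{-s}$ through the layer-cake identity $\theta^{-s}=s\int_\theta^\infty r^{-s-1}\,dr$. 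The trade-off is that the paper's gap-based Lemma~1 pins down the sharp constant and the extremal configuration directly, whereas your approach is more uniform and conceptually closer to standard Riesz-energy arguments (indeed it is essentially the one-dimensional analogue of the method the paper later invokes from Kuijlaars--Saff for $\mathbb{S}^2$). Both are short; yours avoids having to split into cases, while the paper's makes the equality case transparent.
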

Since there are $\sim n^2$ intersection with a contribution of at least $|T_i \cap T_j|^{s} \gtrsim n^{-2s}$, we observe that the trivial lower bound is sharp for $0 \leq s < 1$ and off by a logarithmic factor for $s=1$. 
The result is related in spirit to A. Cordoba's work \cite{cordoba} on the two-dimensional Kakeya problem (which uses that if any two of the lines meet an angle at least $\gtrsim n^{-1}$, then the 
size of the quantity ($s=1$) is $\lesssim \log{n}$).
In particular,  the total overlap when all great circles meet at an equal angle is of order $\sim \log{n}$.
However, there are much better arrangements for the problem on $\mathbb{S}^2$.

\begin{center}
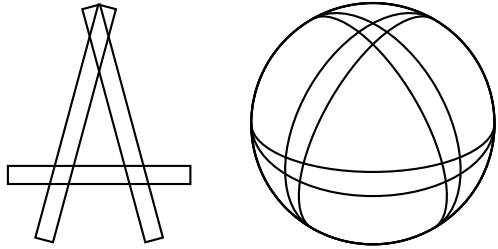
\begin{figure}[h!]
\begin{tikzpicture}[scale=0.8]
\draw[thick] (-6, -1) rectangle (-3, -0.7);
\draw[thick, rotate around={75:(-5,-1)}] (-6, -1) rectangle (-2, -0.7);
\draw[thick, rotate around={-75:(-4,-1)}] (-7, -1) rectangle (-3, -0.7);
\foreach \angle[count=\n from 1] in {0,120,240} {
    \begin{scope}[rotate=\angle]
\draw[thick] (0,0) circle (2) ;
    \path[thick, draw] (-2,0) arc [start angle=180,
                        end angle=360,
                        x radius=2cm,
                        y radius=0.8cm] ;
                            \path[thick,draw] (-2,0) arc [start angle=180,
                        end angle=360,
                        x radius=2cm,
                        y radius=1.2cm] ;
    \end{scope}
    };
\end{tikzpicture}
\caption{The difference between $\mathbb{R}^2$ and $\mathbb{S}^2$ illustrated: the curvature of $\mathbb{S}^2$ increases transversality, which decreases the area of intersection.}
\end{figure}
\end{center}
\vspace{-20pt}

\begin{thm} Let $s \geq 0$. There exists $c_s > 0$ such that for every set of $n$ great circles 
$$ \sum_{i, j = 1 \atop i \neq j}^{n}{|C_i \cap C_j|^s} \geq c_s \begin{cases} n^{2 - 2s} \qquad &\mbox{if}~0 \leq s < 2 \\ n^{-2} \log{n} \qquad &\mbox{if}~s = 2\\ n^{1- 3s/2} \qquad &\mbox{if}~s > 2. \end{cases}$$
\end{thm}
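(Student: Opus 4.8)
The plan is to reduce everything to a lower bound for a Riesz-type energy of $n$ points on the projective plane $\mathbb{RP}^2=\mathbb{S}^2/\{\pm 1\}$, and then prove that bound by an elementary pigeonhole count combined with a layer-cake decomposition; the logarithm in the critical case $s=2$ will appear exactly as $\log n \sim \#\{\text{dyadic scales between }n^{-1/2}\text{ and }1\}$. For the reduction, assign to each great circle $G_i$ its pole $p_i\in\mathbb{RP}^2$ (the unit normal of its plane, defined up to sign) and let $d$ be the quotient metric, so $d(p_i,p_j)=\arccos|\langle p_i,p_j\rangle|\in[0,\pi/2]$ equals the angle $\theta_{ij}$ at which $G_i$ and $G_j$ cross. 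The first step is the elementary estimate
$$ |C_i\cap C_j| \;\gtrsim\; \frac{1}{n^{2}\,\phi_{ij}},\qquad \phi_{ij}:=\max\!\Bigl(\theta_{ij},\tfrac1n\Bigr)\le\tfrac{\pi}{2}, $$
proved by cases: if $\theta_{ij}\lesssim 1/n$ then $G_j$ stays within geodesic distance $O(1/n)$ of $G_i$, so a $(1/n)$-tube around a fixed-length arc of $G_j$ lies in $C_i\cap C_j$ and already has area $\gtrsim 1/n$; if $\theta_{ij}\gtrsim 1/n$ then near each of the two antipodal crossing points the two $(1/n)$-tubes overlap in a rhombus of area $\sim (1/n)^2/\sin\theta_{ij}\sim 1/(n^2\theta_{ij})$, whose diameter is $\sim 1/(n\theta_{ij})\lesssim 1$ and which therefore genuinely fits inside a single normal chart (this is the point where the curvature of $\mathbb{S}^2$ helps). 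Summing and using $\phi_{ij}\le\pi/2$,
$$ \sum_{i,j=1\atop i\neq j}^{n}|C_i\cap C_j|^{s}\;\gtrsim_s\; n^{-2s}\sum_{i,j=1\atop i\neq j}^{n}\phi_{ij}^{-s}\;=:\;n^{-2s}E_s, $$
so it remains to bound $E_s$ from below for an arbitrary configuration of $n$ points in $\mathbb{RP}^2$.

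For $0\le s<2$ there is nothing more to do: $\phi_{ij}\le\pi/2$ gives $E_s\ge(\pi/2)^{-s}n(n-1)\gtrsim_s n^{2}$, hence $\sum_{i\neq j}|C_i\cap C_j|^s\gtrsim_s n^{2-2s}$. For $s\ge 2$ the main input is a counting lemma: there are constants $C_1,c_1,c_2>0$ such that
$$ M(t):=\#\{(i,j):1\le i\neq j\le n,\ d(p_i,p_j)\le t\}\;\ge\; c_2\,n^{2}t^{2}\qquad\text{for all }t\in[C_1 n^{-1/2},c_1]. $$
To prove it, partition $\mathbb{RP}^2$ into at most $Ct^{-2}$ pieces of diameter $\le t$ (Voronoi cells of a maximal $t/2$-separated set); if $m_\ell$ of the $p_i$ fall in the $\ell$-th piece then $\sum_\ell m_\ell=n$, so Cauchy--Schwarz gives $\sum_\ell m_\ell^2\ge n^{2}t^{2}/C$, and $M(t)\ge\sum_\ell m_\ell(m_\ell-1)\ge n^{2}t^{2}/C-n\gtrsim n^{2}t^{2}$ once $t\ge C_1 n^{-1/2}$ with $C_1$ large. (For bounded $n$ the theorem is trivial after adjusting constants, so we may assume $n$ large; in particular $C_1 n^{-1/2}\ge 1/n$.)

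Finally, write $E_s$ by the layer-cake formula, using that $\phi_{ij}\le t$ already forces $t\ge 1/n$:
$$ E_s \;=\; s\int_0^{\infty} t^{-s-1}\,\#\{(i,j):i\neq j,\ \phi_{ij}\le t\}\,dt \;\ge\; s\int_{C_1 n^{-1/2}}^{c_1} t^{-s-1}M(t)\,dt \;\gtrsim\; n^{2}\int_{C_1 n^{-1/2}}^{c_1} t^{1-s}\,dt. $$
For $s=2$ the remaining integral equals $\log\!\bigl(c_1 n^{1/2}/C_1\bigr)\gtrsim\log n$, so $E_2\gtrsim n^{2}\log n$ and $\sum_{i\neq j}|C_i\cap C_j|^2\gtrsim n^{-4}\cdot n^{2}\log n=n^{-2}\log n$. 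For $s>2$ the integral is $\sim_s (C_1 n^{-1/2})^{2-s}\sim_s n^{(s-2)/2}$, so $E_s\gtrsim_s n^{s/2+1}$ and $\sum_{i\neq j}|C_i\cap C_j|^s\gtrsim_s n^{-2s}\cdot n^{s/2+1}=n^{1-3s/2}$.

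The step I expect to be the main obstacle is the elementary reduction estimate: one has to treat the three angular regimes ($\theta_{ij}$ tiny, $\theta_{ij}\sim 1/n$, $\theta_{ij}\gtrsim 1/n$) carefully and, in the last regime, confirm that the crossing rhombus truly lies inside a single chart so that its flat-area computation is legitimate on $\mathbb{S}^2$. This is the only place where the actual geometry of the sphere (rather than soft energy arguments) enters, and it is precisely the mechanism by which $\mathbb{S}^2$ outperforms $\mathbb{R}^2$; once it is in place, the rest is the routine energy estimate above, whose only slightly surprising feature is that the logarithm at $s=2$ comes not from any forced clustering but simply from integrating $M(t)\gtrsim n^{2}t^{2}$ over the $\sim\log n$ scales between $n^{-1/2}$ and $1$ (scales below $n^{-1/2}$ contribute nothing and are never needed).
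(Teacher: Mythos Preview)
Your proof is correct and shares with the paper the key reduction step: associate to each great circle its pole and use $|C_i \cap C_j| \gtrsim n^{-2}/\text{angle}(G_i,G_j)$ (the paper records this as Lemma~3, derived from an exact formula of Fodor--V\'igh--Zarn\'ocz). From there the two arguments diverge.

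For the small-angle regime you absorb everything into the truncated quantity $\phi_{ij} = \max(\theta_{ij},1/n)$, which is clean; the paper instead argues separately that the set of indices $i$ admitting a $1/n$-close neighbour is either so large that the desired bound already holds, or small enough (of size $\lesssim n^{1-s/2}$, resp.\ $\lesssim \log n$) to be discarded. For the energy lower bound you run a single argument for all $s\ge 2$: a Cauchy--Schwarz pigeonhole on a partition of $\mathbb{RP}^2$ into $\lesssim t^{-2}$ cells gives $M(t)\gtrsim n^2 t^2$ on $[C_1 n^{-1/2},c_1]$, and the layer-cake integral then produces the bound, with the logarithm at $s=2$ falling out of $\int t^{-1}\,dt$. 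The paper proceeds differently in the two ranges: for $s>2$ it bounds the energy via nearest-neighbour distances, H\"older, and the AM--HM inequality combined with the packing bound $\sum_i \inf_{j\ne i}\|p_i-p_j\|^2 \lesssim 1$; for the critical case $s=2$ it does not give a self-contained argument but quotes the Riesz-energy lower bound of Kuijlaars--Saff.

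Your route has the advantage of being uniform in $s\ge 2$ and fully self-contained, and your handling of the $\theta_{ij}<1/n$ regime via truncation is slicker than the paper's removal step. The paper's nearest-neighbour argument for $s>2$ is also elementary and makes transparent that for large $s$ only the closest pair matters, but at $s=2$ it outsources the work. The geometric reduction you flag as the main obstacle is exactly where the paper invests its technical effort (Lemmas~2 and~3); your case split $\theta_{ij}\lesssim 1/n$ versus $\theta_{ij}\gtrsim 1/n$ captures the content correctly.
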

All these results are sharp and we can obtain arrangements achieving these bounds as follows: pick $n$ points $p_1 , p_2, \dots, p_n \in \mathbb{S}^2$ in such a way that 
$$ \min_{1 \leq i, j \leq 1 \atop i \neq j}{  \| \pm p_i \pm p_j \|} \gtrsim \frac{1}{\sqrt{n}}$$
and then define $n$ great circles by defining $p_i$ and $-p_i$ to be the poles and take the 'equator'. 
One possible formulations of the Kakeya 
problem in $\mathbb{R}^2$ is as follows: given $n$ rectangles of size $1 \times 1/n$ such that any two intersect at an angle $\gtrsim n^{-1}$, how small can their union be? In particular, it is possible arrange rectangles in such a way that any two meet at an angle of at least $\gtrsim n^{-1}$ and the total area they cover is much smaller than the sum of the areas of the rectangles. This is not possible for the
analogous problem on $\mathbb{S}^2$.

\begin{corollary} Given $n$ great circles on $\mathbb{S}^2$ such that any two intersect at an angle $\geq 1/(100\sqrt{n})$, the union of their $1/n-$neighborhoods satisfies
$$ \left| \bigcup_{i=1}^{n}{ C_i} \right| \gtrsim 1.$$
\end{corollary}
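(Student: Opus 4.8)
The plan is to run the natural second-moment estimate. Put $f=\sum_{i=1}^{n}\mathbf{1}_{C_i}$ and $U=\bigcup_{i=1}^{n}C_i$; since $f$ vanishes off $U$, Cauchy--Schwarz gives
$$ \Big(\sum_{i=1}^{n}|C_i|\Big)^{2}=\Big(\int_{\mathbb{S}^2}f\Big)^{2}\le |U|\int_{\mathbb{S}^2}f^{2}=|U|\sum_{i,j=1}^{n}|C_i\cap C_j|. $$
Because the $1/n$-neighborhood of a great circle has area $4\pi\sin(1/n)\gtrsim 1/n$, the left-hand side is $\gtrsim 1$, so the corollary reduces to the estimate $\sum_{i,j}|C_i\cap C_j|\lesssim 1$. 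The diagonal contributes $\sum_i|C_i|\lesssim 1$, and everything comes down to bounding the off-diagonal sum; this is exactly where the angle hypothesis enters.

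First I would translate the hypothesis into a statement about poles. If $p_i\in\mathbb{S}^2$ is a unit normal of the $i$-th plane, the great circles $i$ and $j$ cross at the antipodal pair $\pm(p_i\times p_j)/\|p_i\times p_j\|$ and do so at angle $\theta_{ij}=\min\{\angle(p_i,p_j),\ \pi-\angle(p_i,p_j)\}$. Hence the assumption $\theta_{ij}\ge \delta:=1/(100\sqrt n)$ says precisely that the poles, regarded as points of $\mathbb{RP}^2$ with its angular metric, are $\delta$-separated; a standard packing bound then shows that any metric ball of radius $r\ge\delta$ there contains $\lesssim (r/\delta)^2\lesssim r^2 n$ poles. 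On the geometric side, a Euclidean model near a crossing point gives $|C_i\cap C_j|\lesssim (n^2\sin\theta_{ij})^{-1}\lesssim (n^2\theta_{ij})^{-1}$: the part of $C_i\cap C_j$ near each of the two crossings is essentially a parallelogram with sides $\sim 1/n$ and $\sim 1/(n\sin\theta_{ij})$, and since $\sin\theta_{ij}\gtrsim n^{-1/2}\gg n^{-1}$ this parallelogram has diameter $o(1)$, so the flat approximation is legitimate.

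Now I would sum dyadically. Fix $i$ and group the remaining indices according to $\theta_{ij}\in[2^k\delta,2^{k+1}\delta)$ for $k=0,1,\dots,O(\log n)$ (the largest scale being $\theta_{ij}\asymp 1$, i.e.\ $2^k\asymp\sqrt n$). By the packing bound there are $\lesssim 2^{2k}$ indices in the $k$-th group, each contributing $\lesssim (n^2\,2^k\delta)^{-1}\lesssim 2^{-k}n^{-3/2}$, so
$$ \sum_{j\ne i}|C_i\cap C_j|\ \lesssim\ \sum_{k=0}^{O(\log n)}2^{2k}\cdot 2^{-k}n^{-3/2}\ =\ n^{-3/2}\sum_{k}2^{k}\ \lesssim\ n^{-3/2}\cdot\sqrt n\ =\ \frac1n. $$
Summing over the $n$ choices of $i$ yields $\sum_{i\ne j}|C_i\cap C_j|\lesssim 1$; combined with the diagonal term this gives $\sum_{i,j}|C_i\cap C_j|\lesssim 1$, and plugging this into the Cauchy--Schwarz inequality above gives $|U|\gtrsim 1$.

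The one place real care is needed is the interplay between the packing bound and the geometric series: the sum $\sum_{k}2^{k}$ runs up to the scale $2^k\asymp\sqrt n$ and therefore evaluates to $\asymp\sqrt n$, which is exactly what cancels the $n^{-3/2}$ per-term gain and lands at $1/n$ per row. This is why the hypothesis is placed at the scale $\sqrt n$: with only angles $\gtrsim 1/n$ one could pack all $n$ poles (roughly $1/n$-separated) inside a cap of radius $\sim 1/\sqrt n$, forcing every great circle to lie within $\sim 1/\sqrt n$ of a fixed equator and making $|U|\lesssim 1/\sqrt n\to 0$; so the $\sqrt n$-scaling is genuinely used, and it is used precisely through the truncation of this dyadic sum. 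Beyond that, the only technicality is making the local crossing model rigorous on the curved sphere, which is routine since all relevant lengths are $\lesssim n^{-1/2}$.
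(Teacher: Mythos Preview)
Your proof is correct and follows essentially the same route as the paper: reduce to bounding $\sum_{i\neq j}|C_i\cap C_j|\lesssim 1$ via Cauchy--Schwarz, then use the angle hypothesis as a separation condition on the poles together with an annulus packing count to control the overlap sum. The only cosmetic differences are that the paper bins the poles into linear annuli of width $\sim 1/\sqrt{n}$ rather than dyadic shells, and appeals to its Lemma~3 for the bound $|C_i\cap C_j|\lesssim (n^2\sin\theta_{ij})^{-1}$ rather than a local Euclidean model; neither changes the structure of the argument.
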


\textit{Notation and Outline.} As for notation, we will not be concerned with the exact size of constants and will use $\lesssim, \gtrsim$ and $\sim$ throughout the paper. Here, $A \lesssim B$ means that $A \leq c B$
for some universal constant $0 < c < \infty$ that does not depend on any of the other parameters. $A \sim B$ means $A \lesssim B$ and $B \lesssim A$. Section \S 2 discusses the main idea behind
the proof, how it relates to Riesz energies and various other related problems. Section \S 3 gives a proof of the Proposition, Section \S 4 the proof of the main statement and Section \S 5 discusses sharpness of the result and gives a proof of the Corollary.

\section{Sketch of the proof -- related and open problems} The proofs hinge on the following observation: if  $C_1, C_2$ are the $1/n-$neighborhood of two great circles, then we may associate
to them their respective 'north poles' $p_1, p_2$ (see Fig. 3). As long as their angle of intersection satisfies $\alpha \gtrsim 1/n$, a rough approximation of the volume of intersection is $ |C_1 \cap C_2| \sim n^{-2} \alpha^{-1}$. At the same time, the angle of intersection also determines the geodesic distance between
the north poles $p_1, p_2$ via $ d_{\mathbb{S}^2}(p_1, p_2) = \alpha$. The geodesic distance on $\mathbb{S}^2$ and the Euclidean distance in $\mathbb{R}^3$ are equivalent up to constants
$$ \frac{2}{\pi} \|p_1 - p_2\| \leq d_{\mathbb{S}^2}(p_1, p_2) \leq \|p_1 - p_2\|.$$

\begin{center}
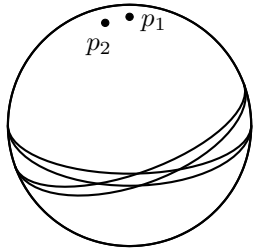
\begin{figure}[h!]
\begin{tikzpicture}[scale=0.8]
\foreach \angle[count=\n from 1] in {0,20} {
    \begin{scope}[rotate=\angle]
\draw[thick] (0,0) circle (2) ;
    \path[thick, draw] (-2,0) arc [start angle=180,
                        end angle=360,
                        x radius=2cm,
                        y radius=0.8cm] ;
                            \path[thick,draw] (-2,0) arc [start angle=180,
                        end angle=360,
                        x radius=2cm,
                        y radius=1cm] ;
    \end{scope}
    };
\filldraw (0,1.8) circle (0.06cm);
\filldraw (-0.4,1.7) circle (0.06cm);
\node at (0.4, 1.7) {$p_1$};
\node at (-0.5, 1.3) {$p_2$};
\end{tikzpicture}
\caption{Controlling $|C_1 \cap C_2|$ via $\|p_1 - p_2\|^{-1}$.}
\end{figure}
\end{center}
This suggests the approximation
$$ |C_1 \cap C_2| \sim \frac{1}{n^2} \frac{1}{\alpha} = \frac{1}{n^2} \frac{1}{d_{\mathbb{S}^2}(p_1, p_2)} \sim  \frac{1}{n^2} \frac{1}{\| p_1 -  p_2 \|}$$
and motivates the idea that
$$ \min_{C_1, \dots, C_n} \sum_{i, j = 1 \atop i \neq j}^{n}{|C_i \cap C_j|^s} \qquad \mbox{could be very similar to} \qquad  \min_{p_1, \dots, p_n} \frac{1}{n^{2s}}  \sum_{i, j = 1 \atop i \neq j}^{n}{ \frac{1}{\|p_i - p_j\|^s} }.$$
This is a classical problem ('point sets minimizing the $s-$Riesz energy') that has been very widely studied.
It was first considered by J. J. Thomson \cite{thomson} in 1904 for $s=1$ as a model for the behavior of $n$ charged
electrons on the sphere. It has only been solved for small values of $n$ and is Problem 7 on Smales's list of 'Mathematical Problems for the
Next Century' \cite{smale}. A concise survey of existing results has become impossible,
there are now hundreds of papers in mathematics, chemistry and physics on that subject  (see e.g. the  surveys of Hardin \& Saff \cite{hardin}, Saff \& Kuijlaars \cite{saff} and Brauchart \& Grabner \cite{brauch})
and there are a surprising number of applications in mathematics alone (quadrature rules, starting points for Newton’s method, finite element tessellations,...).\\

\textbf{Open problems: Riesz energies.} We note that the two problems are very similar but far from identical: in the problem of minimizing Riesz energies, the quantity $\|p_i - p_j\|^{-s}$ becomes unbounded if $p_i$ moves closer to
$p_j$. No such singularity is present in the geometric problem since $|C_i \cap C_j| \lesssim |C_i| \sim 1/n$. Moreover, in replacing great circles by poles, the interaction is between pairs
of antipodal points instead of pairs of points. \\

\begin{quote} \textit{Open problem.} Is it possible to use techniques from the study of point sets with minimal $s-$Riesz energy on $\mathbb{S}^2$ to get information about minimizers of the minimum overlap problem?\\
\end{quote}

While the Riesz energy $\|p_i - p_j\|^{-s}$ is natural from a physical point of view, it is perhaps less clear how well it interacts with the geometry of $\mathbb{S}^2$ (with the
possible exception of $s=-1$, see Stolarsky's invariance principle \cite{stolarsky}). Indeed, in the case of Coulomb energy $s=1$ it is even difficult to make definite
statements about relatively small values of $n$ (already $n=5$ is highly nontrivial and was only recently solved by Schwartz \cite{schw}). For large values of $n$ numerical computations
suggest a complicated asymptotic picture: the Voronoi cells of the arising configurations are mostly hexagonal but there are some 'scars' comprised of pentagonal
and heptagonal Voronoi cells (see Calef, Griffiths, Schulz, Fichtl \& Hardin \cite{many}). In contrast, we believe that the quantity $|C_i \cap C_j|$ may be more complicated to write
in closed form (see Lemma 2) but exceedingly natural from a geometric point of view which \textit{might} give rise to simpler optimal configurations.\\

\begin{quote} \textit{Open problem.} Are minimizers of the minimum overlap problem more structured than minimal energy configurations of the Riesz energy?\\
\end{quote}

One reason why it is conceivable there might be some additional structure is that one can interpret the $1/n-$neighborhood as a smoothing parameter: let us now consider
the $\delta-$neighborhoods $C_{1, \delta}, C_{2, \delta}, \dots, C_{n, \delta}$ of $n$ fixed great circles where no two great circles coincide and let $p_1, \dots, p_n$ denote one of their 'poles' (again, it does not matter which
one of the two is chosen). Then
$$ \lim_{\delta \rightarrow 0}{ \frac{1}{\delta^2} \sum_{i, j = 1 \atop i \neq j}^{n}{|C_{i,\delta} \cap C_{j,\delta}|^s}} =  \sum_{i, j = 1 \atop i \neq j}^{n}{  
 \frac{1}{   \left(   1 - \left\langle p_i, p_j \right\rangle^2 \right)^{s/2}}   },$$
This is very easy to see and will follow as a byproduct of our main argument.
 It introduces a curious variant of Riesz energies insofar as in minimizing configurations points not only repel their neighbors
but also repel points close to their own antipodal point: the limit $s \rightarrow \infty$ can be understood as a circle packing problem in the Grassmannian
$G(3,1)$ (see Conway, Hardin \& Sloane \cite{chs} for some numerical results). In particular, the minimum overlap problem can
be understood as a relaxation of this packing problem. It might also be natural to replace $1/n-$neighborhoods
by smoother cut-off functions and use the interpretation present in the next section to replace the geometric problem by an estimate
on $L^2-$norms.\\

\textbf{Open problems: $L^p-$norms.} The sum of total mutual overlap ($s=1$) is essentially equivalent to the $L^2-$norm of the sum of characteristic functions: using $\chi_{C_i}$ to denote the characteristic function of $C_i$ on $\mathbb{S}^2$, we see that
\begin{align*}
1 +  \sum_{i, j = 1 \atop i \neq j}^{n}{|C_i \cap C_j|}  &=    \int_{\mathbb{S}^2}{ \sum_{i= 1}^{n}{\chi_{C_i}^2} +
\sum_{i,j= 1 \atop i \neq j}^{n}{\chi_{C_i} \chi_{C_j}} dx} = \int_{\mathbb{S}^2}{ \left( \sum_{i= 1}^{n}{\chi_{C_i}} \right)^2 dx}.
\end{align*}
This means there are arrangements of great circles where
$$  \left\|   \sum_{i= 1}^{n}{\chi_{C_i}} \right\|_{L^1(\mathbb{S}^2)} \sim 1 \sim \left\|   \sum_{i= 1}^{n}{\chi_{C_i}} \right\|_{L^2(\mathbb{S}^2)}.$$
 It would be very interesting to understand the behavior of the $L^p-$norm for some
$p > 2$ and this seems to be a very difficult problem. The special case $L^{\infty}$ is related to an old combinatorial problem in discrete geometry
posed by Motzkin and (independently) W. M. Schmidt (see \cite{beck}).
\begin{center}
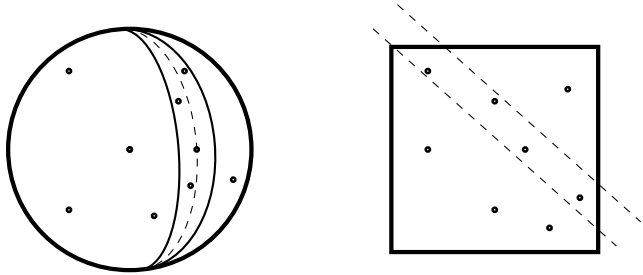
\begin{figure}[h!]
\begin{tikzpicture}[scale=0.8]
\draw[ultra thick] (0,0) circle (2) ;
\foreach \angle[count=\n from 1] in {95} {
    \begin{scope}[rotate=\angle]
    \path[thick, draw] (-2,0) arc [start angle=180,
                        end angle=360,
                        x radius=2cm,
                        y radius=0.8cm] ;
                            \path[dashed,draw] (-2,0) arc [start angle=180,
                        end angle=360,
                        x radius=2cm,
                        y radius=1.1cm] ;
                            \path[thick,draw] (-2,0) arc [start angle=180,
                        end angle=360,
                        x radius=2cm,
                        y radius=1.4cm] ;
    \end{scope}
    };
\draw[ultra thick] (0,0) circle (0.02) ;
\draw[ultra thick] (0.4,-1.1) circle (0.02) ;
\draw[ultra thick] (1.7,-0.5) circle (0.02) ;
\draw[ultra thick] (0.8,0.8) circle (0.02) ;
\draw[ultra thick] (0,0) circle (0.02) ;
\draw[ultra thick] (-1,-1) circle (0.02) ;
\draw[ultra thick] (-1,1.3) circle (0.02) ;
\draw[ultra thick] (1,-0.6) circle (0.02) ;
\draw[ultra thick] (1.1,0) circle (0.02) ;
\draw[ultra thick] (0.9,1.3) circle (0.02) ;

\draw [ultra thick] (4.3,-1.7) rectangle (7.7,1.7);
\draw[ultra thick] (4.9,1.3) circle (0.02) ;
\draw[ultra thick] (6.9,-1.3) circle (0.02) ;
\draw[ultra thick] (7.2,1) circle (0.02) ;
\draw[ultra thick] (4.9,0) circle (0.02) ;
\draw[ultra thick] (6,0.8) circle (0.02) ;
\draw[ultra thick] (6,-1) circle (0.02) ;
\draw[ultra thick] (7.4,-0.8) circle (0.02) ;
\draw[ultra thick] (6.5,0) circle (0.02) ;
\draw [dashed] (4, 2) -- (8, -1.6);
\draw [dashed] (4+0.4, 2+0.4) -- (8+0.4, -1.6+0.4);
\end{tikzpicture}
\caption{Finding strips that contains many points.}
\end{figure}
\end{center}
\vspace{-10pt}
 Associating to every great circle a 'north pole', it is not difficult to see that three
great circles meet in a point if and only if their north poles are contained in a another great circle. The $L^{\infty}-$question
is therefore largely equivalent to the following question: given a set of $n$ points on $\mathbb{S}^2$ and all 'zones' ($1/n-$neighborhoods
of a a great circle), which zone contains the most points?  The problem is easier to state in $\mathbb{R}^2$: given $n$ points in $[0,1]^2$, how many points are in the $2 \times (1/n)-$rectangle containing the most points? The dual question is as follows: given a set of $n$ points how thick is is the thinnest strip containing 3 points (where a strip is simply a neighborhood of an infinite line)? This problem is (independently) due to Motzkin and W. Schmidt and it is not even clear whether the thickness is $o(1/n)$ (the bound $\lesssim 1/n$ follows immediately from pigeonholing). A conditional result is due to Beck \cite{beck}. Another
family of problems that are vaguely related can be found in results surrounding Tarski's plank problem and its various generalizations (see e.g. Bezdek \cite{bezdek}).
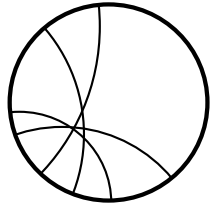
\begin{figure}[h!]
\begin{center}
\begin{tikzpicture}[scale=1.3]
  \tkzDefPoint(0,0){O}
  \tkzDefPoint(1,0){A}
  \tkzDrawCircle[ultra thick](O,A)
  \tkzClipCircle(O,A)
  \tkzDefPoint(-1,-0.1){z1}
  \tkzDefPoint(0,-0.8){z2}
  \tkzDrawCircle[ thick,orthogonal through=z1 and z2](O,A)
  \tkzDefPoint(-0.1,1){z5}
  \tkzDefPoint(-1,-1){z6}
  \tkzDrawCircle[ thick,orthogonal through=z5 and z6](O,A)
  \tkzDefPoint(-0.9,1){z7}
  \tkzDefPoint(-2,-2){z8}
  \tkzDrawCircle[ thick,orthogonal through=z7 and z8](O,A)
  \tkzDefPoint(-0.85,-0.3){z9}
  \tkzDefPoint(0.7,-0.85){z10}
  \tkzDrawCircle[ thick,orthogonal through=z9 and z10](O,A)
\end{tikzpicture}
\end{center}
\caption{Four geodesics in the hyperbolic plane any two of which intersect.}
\end{figure}

One interpretation of the main result is that it quantifies the extent to which positive curvature may increase the transversality of intersecting geodesics: can anything be said about
the problem in the hyperbolic plane?

\section{Proof of the Proposition}
\subsection{Lemma} We start by proving an elementary inequality.
\begin{lemma} Let $0 \leq x_1 < x_2 < \dots < x_n < 2\pi$. Then we have the sharp inequality
$$ \sum_{i,j = 1 \atop i \neq j}^{n}{\frac{1}{|x_i - x_j|_{}} + \frac{1}{2\pi-|x_i-x_j|_{}}} \geq  \frac{n^2}{\pi} \sum_{k=1}^{n-1}{\frac{1}{k}}.$$
\end{lemma}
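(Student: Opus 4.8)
The plan is to reinterpret the two quantities $|x_i - x_j|$ and $2\pi - |x_i - x_j|$ as the lengths of the two arcs into which the points $x_i, x_j$ cut the circle $\mathbb{R}/2\pi\mathbb{Z}$. Relabelling so that $x_1 < x_2 < \dots < x_n$ are in cyclic order, introduce the $n$ consecutive gaps $g_k = x_{k+1} - x_k$ for $1 \le k \le n-1$ and $g_n = 2\pi - x_n + x_1$, so that $g_k > 0$ and $\sum_{k=1}^{n} g_k = 2\pi$. For an ordered pair $(i,j)$ with $i \neq j$ let $a_{ij}$ be the length of the arc traversed in going from $x_i$ forward through $x_{i+1}, x_{i+2}, \dots$ until first reaching $x_j$; then $a_{ij} = g_i + g_{i+1} + \dots + g_{i+\ell-1}$ (indices mod $n$) for a unique length $\ell \in \{1, \dots, n-1\}$, and $(i,j) \mapsto (i,\ell)$ is a bijection onto $\{1,\dots,n\}\times\{1,\dots,n-1\}$. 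Since $\{a_{ij}, a_{ji}\} = \{|x_i - x_j|,\, 2\pi - |x_i - x_j|\}$ and the summand is symmetric in $i,j$, the left-hand side of the lemma equals
$$ 2\sum_{i=1}^{n}\sum_{\ell=1}^{n-1} \frac{1}{g_i + g_{i+1} + \dots + g_{i+\ell-1}}. $$

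Next I would fix $\ell$ and apply the convexity of $t \mapsto 1/t$ (equivalently, Cauchy--Schwarz) to the inner sum over $i$. Each gap $g_k$ lies in exactly $\ell$ of the $n$ windows of length $\ell$, so $\sum_{i=1}^{n}(g_i + \dots + g_{i+\ell-1}) = \ell \sum_{k=1}^{n} g_k = 2\pi\ell$, and therefore
$$ \sum_{i=1}^{n} \frac{1}{g_i + \dots + g_{i+\ell-1}} \;\ge\; \frac{n^2}{\sum_{i=1}^{n}(g_i + \dots + g_{i+\ell-1})} \;=\; \frac{n^2}{2\pi\ell}. $$
Summing over $\ell = 1, \dots, n-1$ and multiplying by $2$ yields exactly $\frac{n^2}{\pi}\sum_{k=1}^{n-1}\frac{1}{k}$, which is the asserted inequality. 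Equality in the convexity step forces every window of a given length to have the same total length, i.e. all gaps equal, so the bound is sharp and attained by equally spaced points.

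The argument is short, and the only genuine analytic content is the single convexity estimate; the step I expect to need the most care is the combinatorial bookkeeping — the identification of the double sum over pairs with the sum over (starting index, length) of cyclic gap-intervals, together with the factor $2$ coming from the two orientations of each pair. One should also briefly note the degenerate case $x_1 = 0$, which only affects the definition of $g_n$ and not the conclusion.
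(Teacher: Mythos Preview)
Your proof is correct and essentially identical to the paper's own argument: the paper introduces the same consecutive gaps (denoted $h_i$ instead of $g_i$), rewrites the double sum as $2\sum_{k=1}^{n-1}\sum_{m=1}^{n}\bigl(\sum_{i=m}^{m+k-1}h_i\bigr)^{-1}$, observes that the inner denominators sum to $2\pi k$, and applies the AM--HM inequality (your ``convexity of $t\mapsto 1/t$'') to each fixed window length. The only cosmetic differences are notation and that you phrase the key estimate as convexity/Cauchy--Schwarz rather than AM--HM.
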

\begin{proof} Instead of working with the $x_i$, we will instead consider $h_i = x_{i+1} - x_i$ for $1 \leq i \leq n-1$ and
$h_n = x_1 + (2\pi - x_n)$ and will set $h_{n+k} := h_{k}$ for $1 \leq k \leq n$. Then it is not difficult to see that
$$
  \sum_{i,j = 1 \atop i \neq j}^{n}{\frac{1}{|x_i - x_j|_{}} + \frac{1}{2\pi-|x_i-x_j|_{}}} = 2\sum_{i,j = 1 \atop i < j}^{n}{\frac{1}{|x_i - x_j|_{}} + \frac{1}{2\pi-|x_i-x_j|_{}}}
= 2\sum_{k=1}^{n-1}{ \sum_{m=1}^{n}{ \left(\sum_{i = m}^{i = m + k-1}{h _i}  \right)^{-1}}}.$$
We now observe that for fixed $k$
$$  \sum_{m=1}^{n}{ \left(\sum_{i = m}^{i = m + k-1}{h _i}  \right)} = k \sum_{m=1}^{n}{h _m} = 2 \pi k.$$
The inequality of arithmetic and harmonic mean for $n$ positive real numbers can be written as
$$ \frac{1}{a_1} + \frac{1}{a_2} + \dots + \frac{1}{a_n} \geq \frac{n^2}{a_1 + \dots + a_n}.$$
This gives that
$$ \sum_{m=1}^{n}{ \left(\sum_{i = m}^{i = m + k-1}{h _i}  \right)^{-1}} \geq \frac{n^2}{2 \pi k}$$
from which we can deduce that 
$$ 2\sum_{k=1}^{n-1}{ \sum_{m=1}^{n}{ \left(\sum_{i = m}^{i = m + k}{h _i}  \right)^{-1}}} \geq  \sum_{k=1}^{n-1}{  \frac{n^2}{\pi k}} = \frac{n^2}{\pi} \sum_{k=1}^{n-1}{\frac{1}{k}}.$$
Sharpness follows immediately for $h_1 = h_2 = \dots = h_n = (2\pi)/n.$
\end{proof}

\subsection{Proof of the Proposition}
\begin{proof} Given $1/n-$neighborhoods $T_1, \dots, T_n$ of $n$ lines in $\mathbb{R}^2$ we can assume w.l.o.g.
that $T_1$ is given by the $x-$axis. If any two lines coincide, then the desired quantity is unbounded as their overlap has infinite area. We may thus assume that any two distinct lines intersect in a point. We will now study all other lines by analyzing their angles of intersection with $T_1$. This gives a list $0 = \alpha_1 < \alpha_2, \alpha_3, \dots, \alpha_n < 2\pi.$ The natural notion of distance is the toroidal distance
$$ | \alpha_i - \alpha_j |_{\mathbb{T}} = \min\left(|\alpha_i - \alpha_j| , 2\pi -|\alpha_i - \alpha_j|\right).$$
Clearly, $T_i$ and $T_j$ will then intersect and the smaller angle of intersection (i.e. the one satisfying $0 \leq \alpha \leq \pi/2$) is given by $\min(|\alpha_i - \alpha_j|_{\mathbb{T}}, \pi - |\alpha_i - \alpha_j|_{\mathbb{T}})$. 
 We first compute the area of intersection of the $1/n-$neighborhoods of two lines
meeting at an angle $0 \leq \alpha \leq \pi/2$ using elementary trigonometry (see Fig. 6): the line $\ell_1$ has length $2/n$, which implies that $\ell_2$ has length $2/(n \sin{\alpha})$ while the height of the rhomboid
is given by $2/n$ and 
$$ |T_i \cap T_j| = \frac{2}{n \sin{\alpha}} \frac{2}{n} =  \frac{4}{n^2} \frac{1}{\sin{\alpha}} \geq \frac{ 4/n^2 }{\alpha}.$$

\begin{figure}[h!]
\begin{center}
\begin{tikzpicture}[scale=1.3]
\draw [ultra thick] (0,0) -- (4,0);
\draw [ultra thick] (0,1) -- (4,1);
\draw [ultra thick] (0,-0.3) -- (2,1.2);
\draw [ultra thick] (2,-0.3) -- (4,1.2);
   \draw [black,thick,domain=0:25] plot ({1.2*cos(\x)}, {1.2*sin(\x)});
\node at (0.9, 0.15) {{\Large $\alpha$}};
\draw (1.72,1) -- (2.55,+.1);
   \draw [black,thick,domain=180:203] plot ({4.2+1.2*cos(\x)}, {1+1.2*sin(\x)});
\node at (1.9, 0.5) {{\Large $\ell_1$}};
\node at (2.6, 1.3) {{\Large $\ell_2$}};
\node at (3.2, 0.8) {{\Large $\alpha$}};
   \draw [black,thick,domain=38:130] plot ({2.5+0.4*cos(\x)}, {0.1+0.4*sin(\x)});
\filldraw (2.53, 0.3) circle (0.02cm);
\end{tikzpicture}
\end{center}
\caption{The intersection $T_i \cap T_j$.}
\end{figure}
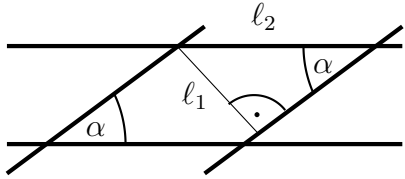

The lower bound for $0 < s < 1$ is trivial, since $|T_i \cap T_j|^s \gtrsim n^{-2s}$ is always true.  We start with $s=1$ and write
$$ \sum_{i, j = 1 \atop i \neq j}^{n}{|T_i \cap T_j|} \geq \frac{4}{n^2} \sum_{i, j = 1 \atop i \neq j}^{n}{\frac{1}{\min(|\alpha_i - \alpha_j|_{\mathbb{T}}, \pi - |\alpha_i - \alpha_j|_{\mathbb{T}})}} \geq \frac{4}{ n^2} \sum_{i, j = 1 \atop i \neq j}^{n}{\frac{1}{|\alpha_i - \alpha_j|_{\mathbb{T}}}   } 
$$
and thus, using $2\min(a,b) \geq 1/a + 1/b$ and Lemma 1
\begin{align*}
 \frac{4}{ n^2} \sum_{i, j = 1 \atop i \neq j}^{n}{\frac{1}{|\alpha_i - \alpha_j|_{\mathbb{T}}}   } &=  \frac{4}{ n^2} \sum_{i, j = 1 \atop i \neq j}^{n}{\frac{1}{\min(|\alpha_i - \alpha_j|, 2\pi - |\alpha_i - \alpha_j|)  }   }    \\
&\geq  \frac{2}{n^2} \sum_{i, j = 1 \atop i \neq j}^{n}{\frac{1}{|\alpha_i - \alpha_j|_{}}   +    \frac{1}{2\pi - |\alpha_i - \alpha_j|_{}}   } \\
&\geq \frac{2}{\pi}\sum_{k=1}^{n-1}{\frac{1}{k}} \sim \log{k}.
\end{align*}
It remains to consider $s > 1$. We simplify the computation by replacing the sum between all pairs by the sum of all adjacent pairs. Clearly, using $\alpha_{n+1} := \alpha_1$, 
$$   \sum_{i, j = 1 \atop i \neq j}^{n}{|T_i \cap T_j|}  \gtrsim \frac{1}{n^{2s}}\sum_{i,j = 1 \atop i \neq j}^{n}{\frac{1}{|\alpha_i - \alpha_j|_{}^s}} 
\geq  \frac{1}{n^{2s}} \sum_{i= 1}^{n}{\frac{1}{|\alpha_{i+1} - \alpha_i|_{}^s}}.$$
H\"older's inequality gives that
$$ \sum_{i= 1}^{n}{\frac{1}{| \alpha_{i+1} -  \alpha_i|_{}}} \leq \left( \sum_{i= 1}^{n}{\frac{1}{| \alpha_{i+1} -  \alpha_i|_{}^s}} \right)^{\frac{1}{s}} n^{\frac{s-1}{s}}$$
and therefore
$$  \frac{1}{n^{2s}} \sum_{i= 1}^{n}{\frac{1}{| \alpha_{i+1} -  \alpha_i|_{}^s}} \geq  \frac{1}{n^{2s}} \frac{1}{n^{s-1}} \left(\sum_{i= 1}^{n}{\frac{1}{| \alpha_{i+1} -  \alpha_i|_{}}}\right)^s.$$
Another application of the arithmetic-harmonic inequality gives
$$ \sum_{i= 1}^{n}{\frac{1}{| \alpha_{i+1} -  \alpha_i|_{}}} \geq \frac{n^2}{2\pi}$$
from which the result follows.
\end{proof}

\section{Proof of the Theorem}

\subsection{Spherical trigonometry.}
We start by simplifying the precise closed form for $|C_i \cap C_j|$, where $C_i, C_j$ are $1/n-$neighborhoods
of two great circles meeting at an angle $\alpha$. 
\begin{lemma}[Fodor, V\'igh \& Zarn\'ocz, \cite{fodor}] If $n \geq 2$ and $2/n \leq \alpha \leq \pi/2$, then
\begin{align*}
|C_i \cap C_j| &= 4 \sin{\left(n^{-1}\right)} \arcsin{\left( \frac{1-\cos{(\alpha)}}{\cot{\left(n^{-1}\right)} \sin{(\alpha)}} \right)}  +  4 \sin{\left(n^{-1}\right)} \arcsin{\left( \frac{1+\cos{(\alpha)}}{\cot{\left(n^{-1}\right)} \sin{(\alpha)}} \right)}\\
&-  2 \arccos{ \left( \frac{\cos(\alpha) - \sin^2{(n^{-1})}}{\cos^2{(n^{-1})}}\right)} -  2 \arccos{ \left( \frac{-\cos(\alpha) - \sin^2{(n^{-1})}}{\cos^2{(n^{-1})}}\right)} + 2\pi
\end{align*}
\end{lemma}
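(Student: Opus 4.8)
The goal is to establish the exact closed form for $|C_i \cap C_j|$ from Lemma 2. The plan is to reduce the two-dimensional area computation on $\mathbb{S}^2$ to a sequence of one-dimensional arclength integrals via spherical trigonometry, exploiting the symmetry of the configuration.

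\textbf{Setting up coordinates.} First I would place the two great circles in a convenient position: take the first great circle to be the equator $\{z = 0\}$, so that $C_i$ is the spherical zone $\{|z| \leq \sin(1/n)\}$ of half-width $1/n$ (recall the $1/n$ here is measured as geodesic distance, so the boundary is the pair of circles at geodesic distance $1/n$ from the equator, i.e. at height $\pm \sin(1/n)$). Take the second great circle to pass through the two points $(\pm 1, 0, 0)$ and make angle $\alpha$ with the equator; its $1/n$-neighborhood $C_j$ is the zone of half-width $1/n$ around it. The intersection $C_i \cap C_j$ is a curvilinear region bounded by four small-circle arcs (two from $\partial C_i$, two from $\partial C_j$), and by the symmetry of the picture under $(x,y,z) \mapsto (-x,-y,z)$ and under $(x,y,z)\mapsto(x,-y,-z)$ it actually consists of two congruent lens-shaped pieces centered near $(\pm 1, 0, 0)$; so it suffices to compute one of them and double, which is the origin of the factors of $4$ and $2$ (versus $8$ and $4$) in the stated formula.

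\textbf{Computing the area.} I would compute $|C_i \cap C_j|$ using the co-area/Fubini formula on the sphere in the latitude variable $z$: $|C_i \cap C_j| = \int \mathrm{length}(\{z = t\} \cap C_i \cap C_j)\, \frac{dt}{\sqrt{1-t^2}}$ up to the correct area element, or alternatively integrate the longitudinal width of the region at each latitude. Each of the four bounding arcs, being a small circle, meets a given latitude circle in points whose longitude is an explicit inverse-trigonometric function of $t$ obtained from the spherical law of cosines / the equation of a circle of given spherical radius about a given pole. The terms with $\arcsin$ and the prefactor $4\sin(1/n)$ come from integrating the contributions of the two boundary components of $C_j$ (a substitution $z = \sin(1/n)\sin\theta$ or similar turns the latitude integral into an $\arcsin$, and $\int \frac{dz}{\sqrt{1-z^2}}$ over the relevant range reduces to $\sin(1/n)$ times an arcsine of the endpoint), while the two $\arccos$ terms and the additive $2\pi$ come from the contributions of the two boundary circles of $C_i$ together with the "full zone" normalization (the area of $C_i$ itself being $4\pi\sin(1/n)$, part of which is subtracted off). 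Concretely I expect to write $|C_i \cap C_j|$ as a sum of four arcs' worth of spherical-triangle areas via Girard's theorem (area $=$ angle excess), each vertex angle being the intersection angle of two of the small circles, and those angles are exactly the arguments of the inverse trig functions appearing in the statement. The condition $2/n \leq \alpha \leq \pi/2$ is what guarantees the four arcs actually cross in the expected combinatorial pattern (the two lens pieces are disjoint and each boundary pair genuinely intersects), so that no degenerate cases arise and the arguments of $\arcsin$ stay in $[-1,1]$.

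\textbf{Main obstacle.} The conceptual content is routine spherical trigonometry, but the real work — and the main place errors creep in — is bookkeeping: correctly identifying which of the four small-circle arcs bound the region at each latitude, getting all the signs right in the law-of-cosines expressions for the intersection points (the two $\arccos$ terms differ only by the sign of $\cos\alpha$, reflecting the two boundary circles of $C_i$ on opposite sides of the equator relative to $C_j$'s tilt), and assembling the pieces with the correct multiplicities. I would organize this by first computing, as a lemma, the spherical area of the "digon-like" region between two small circles of spherical radii $1/n$ about two poles at angular distance $\alpha$, then specializing. Since this precise formula is attributed to Fodor, V\'igh and Zarn\'ocz \cite{fodor}, I would in the write-up either cite their computation directly or reproduce it in an appendix; for the purposes of the Theorem only the asymptotic consequence $|C_i \cap C_j| \sim n^{-2}\alpha^{-1}$ for $\alpha \gtrsim 1/n$ — which follows by Taylor-expanding the above as $n \to \infty$ — is actually needed.
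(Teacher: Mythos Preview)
The paper does not prove this lemma at all: it is simply quoted as a result of Fodor, V\'igh and Zarn\'ocz \cite{fodor}, with no argument given. Your final paragraph already lands on exactly this --- cite their computation directly --- and that is precisely what the paper does. So your proposal is ``correct'' in the trivial sense that citing the result suffices, and the spherical-trigonometry sketch you offer goes beyond what the paper itself contains.

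That said, your sketch is more of a narrative than a proof: phrases like ``I expect to write'' and ``the terms \dots\ come from integrating'' describe the shape of an argument without actually executing any step. If you wanted to genuinely reprove the formula rather than cite it, you would need to commit to one method (the co-area/Fubini slicing and the Girard angle-excess approach are two different computations, not two descriptions of the same one) and carry it through; as written, neither is pinned down enough to check. But since the paper's own treatment is simply to invoke \cite{fodor}, none of this is required for parity with the paper.
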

The condition $\alpha \geq 2/n$ is necessary because the two neighborhoods otherwise never separate and the geometric structure is different.
We would ultimately like to estimate $|C_i \cap C_j| \sim 4/((\sin{\alpha}) n^2)$, which is the corresponding area on the flat
background. It is clear that for fixed $\alpha > 0$, this will indeed be the arising limiting behavior as $n \rightarrow \infty$. 
However, even for fixed $n$, we expect effects from curvature 
to be relatively harmless as $\alpha$ approaches $n^{-1}$: if $\alpha \sim n^{-1}$, we obtain $4/((\sin{\alpha}) n^2) \sim 4/n$ while the arising area $|C_i \cap C_j|$ is also of order $\sim 1/n$. This approximation clearly
breaks down as soon as $\alpha \ll 1/n$ in which case the trivial bound is sharp up to constants and
$$ |C_i \cap C_j| \sim |C_i| \sim \frac{1}{n}.$$
\begin{lemma} If $n \geq 2$ and $2/n \leq \alpha \leq \pi/2$, then
$$
\frac{4}{n^2}  \frac{1}{\sin{\alpha}}  \leq |C_i \cap C_j| \leq \frac{4(\pi -2) }{n^2}  \frac{1}{\sin{\alpha}}.
$$
\end{lemma}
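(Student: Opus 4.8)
The plan is to estimate the closed form of Lemma 2 directly. \textbf{First} I would simplify it: using $\tfrac{1-\cos\alpha}{\sin\alpha}=\tan\tfrac{\alpha}{2}$, $\tfrac{1+\cos\alpha}{\sin\alpha}=\cot\tfrac{\alpha}{2}$, $\arccos x=\tfrac{\pi}{2}-\arcsin x$ and $\arccos(-x)=\pi-\arccos x$, the formula collapses to
$$ |C_i\cap C_j|\;=\;4\sin\tfrac1n\bigl(\arcsin a_1+\arcsin a_2\bigr)-2\bigl(\arcsin b_+-\arcsin b_-\bigr),$$
with $a_1=\tan\tfrac1n\tan\tfrac{\alpha}{2}$, $a_2=\tan\tfrac1n\cot\tfrac{\alpha}{2}$, $b_\pm=\bigl(\cos\alpha\pm\sin^2\tfrac1n\bigr)/\cos^2\tfrac1n$. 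Writing $\sigma=\sin\tfrac1n$, $p=\sin\tfrac{\alpha}{2}$, $q=\cos\tfrac{\alpha}{2}$, $P=\sqrt{p^2-\sigma^2}$, $Q=\sqrt{q^2-\sigma^2}$ (real because $\alpha\ge2/n$), product-to-sum identities give $1-a_1^2=\tfrac{Q^2}{q^2(1-\sigma^2)}$, $1-a_2^2=\tfrac{P^2}{p^2(1-\sigma^2)}$, $1-b_-=\tfrac{2p^2}{1-\sigma^2}$, $1-b_+=\tfrac{2P^2}{1-\sigma^2}$, $1+b_-=\tfrac{2Q^2}{1-\sigma^2}$, $1+b_+=\tfrac{2q^2}{1-\sigma^2}$; in particular $a_2\le1$ and $b_+\le1$ with equality iff $\alpha=2/n$, and $(a_1+a_2)\sin\alpha=2\tan\tfrac1n$, $b_+-b_-=2\tan^2\tfrac1n$, $\sin\alpha=2pq$.

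\textbf{Second} I would insert elementary bounds. For $x\in[0,1)$ one has $x\le\arcsin x\le\tfrac{x}{\sqrt{1-x^2}}$; this handles $\arcsin a_1$ (and $\arcsin a_2$ away from $\alpha=2/n$), while near $\alpha=2/n$, where $a_2\to1$, one uses instead $\arcsin a_2=\tfrac{\pi}{2}-\arcsin\tfrac{P}{p\sqrt{1-\sigma^2}}$ with $\tfrac{\pi}{2}-\tfrac{P}{\sigma q}\le\arcsin a_2\le\tfrac{\pi}{2}-\tfrac{P}{p\sqrt{1-\sigma^2}}$, which stays bounded. For the last bracket, $\arcsin b_+-\arcsin b_-=\int_{b_-}^{b_+}\tfrac{dw}{\sqrt{(1-w)(1+w)}}$; bounding $1+w$ by its endpoint values and using $\int_{b_-}^{b_+}\tfrac{dw}{\sqrt{1-w}}=2(\sqrt{1-b_-}-\sqrt{1-b_+})$ gives $\tfrac{2(p-P)}{q}\le\arcsin b_+-\arcsin b_-\le\tfrac{2(p-P)}{Q}$ with $p-P=\tfrac{\sigma^2}{p+P}$. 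Substituting everything and using $\sin\alpha=2pq$ turns each half of the lemma into an inequality among $p,q,\sigma$ (constrained by $p^2+q^2=1$, $\sigma\le p$, $\sigma\le q$, $\sigma\le\sin\tfrac12$), polynomial after clearing $P,Q$.

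\textbf{The main obstacle} is that both inequalities are saturated only in a limit: the ratio $\tfrac{n^2\sin\alpha}{4}|C_i\cap C_j|$ tends to $1$ along $\alpha\equiv\tfrac{\pi}{2}$ (the flat parallelogram) and to $\pi-2$ along $\alpha\equiv\tfrac2n$, as $n\to\infty$, so the constants are forced and the estimates must be second-order accurate at those two corners. The hard corner is $\alpha=2/n$: there $\arcsin a_2\to\tfrac{\pi}{2}$, so $\arcsin x\approx x$ loses the factor $\tfrac{\pi}{2}$, and one must track the correction $\arcsin y=y+O(y^3)$ in $\arcsin a_2$ and estimate $\arcsin b_+-\arcsin b_-$ to the same order --- the two terms $4\sigma\arcsin a_2\sim\pi\sigma$ and $2(\arcsin b_+-\arcsin b_-)\sim4\sigma$ nearly cancelling to produce $(\pi-2)\sigma$. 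Accordingly I would split into the range $\alpha\ge c$ ($c$ a small absolute constant: there every $\arcsin$-argument is bounded away from $1$, the crude bounds all suffice, and $|C_i\cap C_j|=(1+O(1/n))\tfrac{4}{n^2\sin\alpha}$, hence in $[1,\pi-2]\cdot\tfrac{4}{n^2\sin\alpha}$ once $n$ exceeds an absolute threshold) and the range $\tfrac2n\le\alpha\le c$ (there also $\sin\alpha\sim\alpha$, $\sin\tfrac1n\sim\tfrac1n$, and the second-order bookkeeping yields $\pi-2$); the finitely many remaining $n$ are checked by evaluating the monotone function $\alpha\mapsto|C_i\cap C_j|\sin\alpha$ on $[\tfrac2n,\tfrac{\pi}{2}]$. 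A cleaner route is to prove this monotonicity in $\alpha$ first (by differentiating the closed form), which reduces the lemma to the one-variable endpoint inequalities $|C_i\cap C_j|\sin\alpha\le\tfrac{4(\pi-2)}{n^2}$ at $\alpha=\tfrac2n$ and $|C_i\cap C_j|\le\tfrac{4}{n^2}$ at $\alpha=\tfrac{\pi}{2}$; the monotonicity is then the crux.
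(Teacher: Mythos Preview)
Your closing ``cleaner route''---prove that $\alpha\mapsto(\sin\alpha)\,|C_i\cap C_j|$ is monotone and then read off the two constants from the endpoints $\alpha=\pi/2$ and $\alpha=2/n$---is exactly what the paper does. The paper gives only a sketch: it notes that $4/(n^2\sin\alpha)$ is the flat-background area (sharp at $\alpha=\pi/2$), argues geometrically that positive curvature pulls the two geodesics closer and hence can only increase the overlap, takes this as the reason $(\sin\alpha)\,|C_i\cap C_j|$ should be monotone decreasing in $\alpha$, computes the limit $\pi-2$ at the other endpoint $\alpha=2/n$ as $n\to\infty$ with convergence from below, and then defers to ``lengthy but straightforward computations.'' Your careful reduction of the Fodor--V\'igh--Zarn\'ocz formula (the $a_1,a_2,b_\pm$ and $p,q,P,Q$ substitutions) and your second-order analysis of the near-cancellation $4\sigma\arcsin a_2-2(\arcsin b_+-\arcsin b_-)\to(\pi-2)\sigma$ at $\alpha=2/n$ are precisely the details the paper omits; in that sense you have written out more than the paper does. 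The direct case-splitting route you outline first (split at $\alpha=c$, handle small $n$ separately) would also work but, as you yourself note, is strictly more laborious than the monotonicity argument, and the paper does not attempt it.
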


\begin{proof}[Sketch of an argument] The lower bound is sharp as $\alpha$ tends to $\pi/2$ as it is simply the area of intersection on a flat background. On the curved background
the geodesics are closer together than they would be on the flat background, which leads to more interaction and suggests that $(\sin{\alpha(C_i, C_j)})|C_i \cap C_j|$ is monotonically decreasing in $\alpha$. This requires
us to understand $\alpha = 2/n$ 
$$ \mbox{and for}~\alpha = \frac{2}{n}~\mbox{we have} ~  \lim_{n \rightarrow \infty}{ |C_i \cap C_j| \frac{4n^2}{\sin{\alpha}} }= \pi -2,$$
with convergence from below. The details follows from lengthy but straightforward computations.
\end{proof}

\subsection{Proof of Theorem 2}
\begin{proof} The case $0 \leq s < 2$ is trivial since
$$ \sum_{i, j = 1 \atop i \neq j}^{n}{|C_i \cap C_j|^s} \geq \sum_{i, j = 1 \atop i \neq j}^{n}{\frac{1}{n^{2s}}} \sim n^{2-2s}.$$
We will now deal with $s > 2$. We first associate to every great circle a 'north pole' and call this collection of points
$p_1, p_2, \dots, p_n$. Lemma 3 implies, \textit{assuming} the angle of intersection
is $\alpha \gtrsim 1/n$,
$$ |C_i \cap C_j| \sim \frac{1}{n^2} \frac{1}{\sin{\alpha}} \sim \frac{1}{n^2}  \frac{1}{\alpha} \sim  \frac{1}{n^2}  \max \frac{1}{\| \pm p_i \pm p_j\|} \gtrsim \frac{1}{n^2} \frac{1}{\|  p_i - p_j\|}.$$
This is now the classical expression for Riesz energies; it remains to show that we can indeed assume that the
 the angle of intersection is $\alpha \gtrsim 1/n$. We will argue that the set of exceptions 
$$ A = \left\{  1 \leq i \leq n: \exists_{j \neq i} ~~\|p_j - p_i\| \leq \frac{1}{n}\right\}  \qquad \mbox{is small:} \quad \#A \lesssim n^{1-s/2}.$$
This follows immediately from
$$\sum_{i, j = 1 \atop i \neq j}^{n}{ |C_i \cap C_j|^{s}} = \sum_{i=1}^{n}{ \sum_{j=1 \atop j \neq i}^{n}{ |C_i \cap C_j|}} \geq \sum_{i \in A}^{}{ \sum_{j=1 \atop j \neq i}^{n}{ |C_i \cap C_j|^s}} 
\geq   \sum_{i \in A}^{}{n^{-s} }  =  |A| n^{-s}$$
because either we now have $|A| \lesssim n^{1-s/2}$ or we have already shown the desired result. However, if $|A| \lesssim n^{1-s/2},$ we can simply remove the points in $A$ from the set and obtain
a well-separated set of $\sim n$ points for which the desired approximation holds (which, by an abuse of notation, we again call $p_1, p_2, \dots, p_n$ since the subsequent argument is not sensitive
to the precise number of points).
The scaling dictates that the largest contribution comes from the nearest neighbor, so we can again estimate in the same way as we did before in the proof of the Proposition:
$$ \sum_{i, j = 1 \atop i \neq j}^{n}{  \frac{1}{\|p_i - p_j\|^s} }  \geq  \sum_{i = 1}^{n}{  \sup_{ j \neq i} {\frac{1}{\|p_i - p_j\|^s}  }} = 
  \sum_{i = 1}^{n}{  \frac{1}{ \inf_{ j \neq i} {  \|p_i - p_j\|^s}  }} .$$  
We use again H\"older's inequality to bound 
$$ \sum_{i = 1}^{n}{  \frac{1}{ \inf_{ j \neq i} {  \|p_i - p_j\|^2}}} \leq  \left( \sum_{i = 1}^{n}{  \frac{1}{ \inf_{ j \neq i} {  \|p_i - p_j\|^s}}} \right)^{\frac{2}{s}} n^{\frac{s-2}{s}}$$
from which we get that 
$$   \sum_{i = 1}^{n}{  \frac{1}{ \inf_{ j \neq i} {  \|p_i - p_j\|^s}  }}  \geq  \frac{1}{n^{\frac{s-2}{2}}}\left( \sum_{i = 1}^{n}{  \frac{1}{ \inf_{ j \neq i} {  \|p_i - p_j\|^2}}} \right)^{\frac{s}{2}}.$$
Finally, the inequality of arithmetic and harmonic mean gives
$$  \sum_{i = 1}^{n}{  \frac{1}{ \inf_{ j \neq i} {  \|p_i - p_j\|^2}}} \geq \frac{n^2}{ \sum_{i = 1}^{n}{   \inf_{ j \neq i} {  \|p_i - p_j\|^2}}} \gtrsim n^2,$$
where only the last inequality needs to be explained: it is easy to see that by placing a ball of size 
$$  \inf_{ j \neq i} { \frac{ \|p_i - p_j\|}{2}} \quad \mbox{around}~p_i~\mbox{we obtain a set of disjoint balls}$$
and the area of their union is therefore bounded from above
$$ \sum_{i = 1}^{n}{   \inf_{ j \neq i} {  \|p_i - p_j\|^2}} \lesssim 1.$$
Altogether
$$  \frac{1}{n^{2s}} \sum_{i, j = 1 \atop i \neq j}^{n}{  \frac{1}{\|p_i - p_j\|^s} }   \gtrsim  \frac{1}{n^{2s}}   \frac{1}{n^{\frac{s-2}{2}}}\left( \sum_{i = 1}^{n}{  \frac{1}{ \inf_{ j \neq i} {  \|p_i - p_j\|^2}}} \right)^{\frac{s}{2}}
\geq    \frac{1}{n^{2s}}    \frac{1}{n^{\frac{s-2}{2}}} n^s = n^{1 -\frac{3s}{2}}.$$
The remaining case $s=2$ works similarly: the simple argument
$$ |p_i - p_j| \leq \frac{1}{n} \implies |C_i \cap C_j|^2 \gtrsim \frac{1}{n^2}$$
implies that either
 $$ \# \left\{  1 \leq i \leq n: \exists_{j \neq i} ~~\|p_j - p_i\| \leq \frac{1}{n}\right\} \lesssim \log{n}$$
or we already have the desired result. This allows us to remove at most $\log{n}$ points and proceed with a well-separated set for which the desired asymptotic again holds. Then, however, we can invoke the corresponding result for Riesz $2-$energies (due to Kuijlaars \& Saff \cite{kui}) stating that
$$ \inf_{q_1, \dots, q_n \in \mathbb{S}^2}  \sum_{i, j = 1 \atop i \neq j}^{n}{  \frac{1}{\|q_i - q_j\|^2} } \gtrsim n^2 \log{n} \qquad \mbox{to deduce} \qquad 
 \frac{1}{n^4} \sum_{i, j = 1 \atop i \neq j}^{n}{  \frac{1}{\|p_i - p_j\|^2} } \gtrsim n^{-2} \log{n}.$$
\end{proof}

\textit{Remark.} The result of Kuijlaars \& Saff \cite{kui} could have also been invoked in the case $s \neq 2$ but we thought it advantageous to
give a self-contained argument since the proof (which is a variant of the argument given in \cite{kui} but not fundamentally novel) is nice and straightforward.

\section{Sharpness and Proof of the Corollary}

\subsection{Constructing examples}
We will place $n$ great circles implicitly by fixing the location of their north poles $p_1, p_2, \dots, p_n$ in such a way that
$$ \inf_{1 \leq i, j \leq n \atop i \neq j}{ \|  \pm p_i \pm  p_j \| } \gtrsim \frac{1}{\sqrt{n}}.$$
Since we are not interested in sharp constants, this can be done in many different ways -- one could simply take a greedy selection where
each selected point removes two spherical caps of area $\sim 1/n$ which allows, for suitable constants, to get $n$ pairs of points with the desired
property. Then
$$ \sum_{i, j = 1 \atop i \neq j}^{n}{|C_i \cap C_j|^s} \lesssim \frac{1}{n^{2s}}\sum_{i, j = 1 \atop i \neq j}^{n}{\frac{1}{\min  \|  \pm p_i \pm  p_j \|^s }},$$
where the minimum ranges over all 4 possible constellations of signs. We simplify by simply adding the points $-p_1, -p_2, \dots, -p_n$ to
the set via $p_{n+k} = -p_{k}$ for $1 \leq k \leq n$ and use 
$$  \sum_{i, j = 1 \atop i \neq j}^{n}{\frac{1}{\min  \|  \pm p_i \pm  p_j \|^s }} \leq  \sum_{i, j = 1 \atop i \neq j}^{2n}{\frac{1}{\|  p_i -  p_j \|^s }}.$$
We will now fix $p_i$ and find an upper bound on the contribution coming from interactions with that point. Clearly, the quantity is maximized
if all points are as close to $p_i$ as possible, however, all of them are also $\sim 1/\sqrt{n}-$separated. A simple geometric arguments shows that
$$ \# \left\{ 1 \leq j \leq n:  \frac{k}{\sqrt{n}} \leq  \| p_j - p_i\| \leq \frac{k+1}{\sqrt{n}} \right\} \lesssim k.$$\
To see this (see Fig. 6), we place a $\sim 1/\sqrt{n}-$ball around each point in the annulus. Since the points are $\sim 1/\sqrt{n}-$separated, these balls can be chosen so as not to overlap
from which we can conclude that 
$$  \# \left\{ 1 \leq j \leq n:  \frac{k}{\sqrt{n}} \leq  \| p_j - p_i\| \leq \frac{k+1}{\sqrt{n}} \right\}  \frac{1}{n} \lesssim \mbox{area of the annulus} \sim \frac{k}{\sqrt{n}} \frac{1}{\sqrt{n}}.$$

\begin{center}
\begin{figure}[h!]
\begin{tikzpicture}[scale = 0.6]
\draw[thick] (0,0) circle (3cm);
\draw[thick] (0,0) circle (2cm);
\filldraw (0,0) circle (0.08cm);
\node at (0.5, -0.3) {$p_i$};
\draw[fill=white,
         postaction = {pattern=north east lines,pattern color=gray}] (-1.5,1.9) circle (0.5);
\filldraw (-1.5,1.9) circle (0.05cm);
\draw[fill=white,
         postaction = {pattern=north east lines,pattern color=gray}] (-2.1,0.1) circle (0.5);
\filldraw (-2.1,0.1) circle (0.05cm);
\draw[fill=white,
         postaction = {pattern=north east lines,pattern color=gray}] (1.5,-1.9) circle (0.5);
\filldraw (1.5,-1.9) circle (0.05cm);
\draw[fill=white,
         postaction = {pattern=north east lines,pattern color=gray}] (1.9,1.5) circle (0.5);
\filldraw (1.9,1.5) circle (0.05cm);
\draw[fill=white,
         postaction = {pattern=north east lines,pattern color=gray}] (-0.5,-2.2) circle (0.5);
\filldraw (-0.5, -2.2) circle (0.05cm);
\draw[fill=white,
         postaction = {pattern=north east lines,pattern color=gray}] (2.8,0) circle (0.5);
\filldraw  (2.8,0) circle (0.05cm);
\draw[fill=white,
         postaction = {pattern=north east lines,pattern color=gray}] (0.2,2.7) circle (0.5);
\filldraw  (0.2,2.7) circle (0.05cm);
\draw[fill=white,
         postaction = {pattern=north east lines,pattern color=gray}] (-2.5,1.3) circle (0.5);
\filldraw  (-2.5,1.3) circle (0.05cm);
\draw[fill=white,
         postaction = {pattern=north east lines,pattern color=gray}] (-2,-2) circle (0.5);
\filldraw  (-2,-2) circle (0.05cm);
\draw[fill=white,
         postaction = {pattern=north east lines,pattern color=gray}] (2.3,-1) circle (0.5);
\filldraw  (2.3,-1) circle (0.05cm);
\draw[fill=white,
         postaction = {pattern=north east lines,pattern color=gray}] (-2,-0.95) circle (0.5);
\filldraw  (-2,-0.95) circle (0.05cm);
\end{tikzpicture}
\caption{Bounding the number of points at distance $k/\sqrt{n} \leq  \| p_j - p_i\| \leq (k+1)/\sqrt{n}$.}
\end{figure}
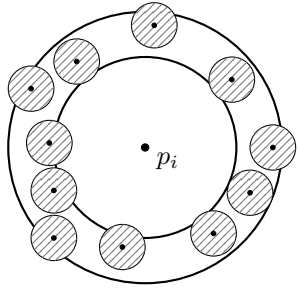
\end{center}

This allows us to compute
$$ \sum_{j = 1 \atop j \neq i}^{2n}{\frac{1}{\|  p_i -  p_j \|^s }} \lesssim \sum_{k=1}^{2 \sqrt{n} }{ \frac{k}{(\frac{k}{\sqrt{n}})^s}} = n^{s/2}  \sum_{k=1}^{2 \sqrt{n} }{  \frac{1}{k^{s-1}}} 
\sim n^{s/2} \begin{cases} n^{\frac{2-s}{2}} \qquad &\mbox{if}~0<s<2 \\
\log{n} \qquad &\mbox{if}~s=2\\
1 \qquad &\mbox{if}~s > 2.
\end{cases}
$$ 
Altogether, we have
$$ \frac{1}{n^{2s}}\sum_{i, j = 1 \atop i \neq j}^{n}{\frac{1}{\min  \|  \pm p_i \pm  p_j \|^s }} \lesssim \frac{n^{}}{n^{2s}}  \sum_{j = 1 \atop j \neq i}^{2n}{\frac{1}{\|  p_i -  p_j \|^s }} 
\lesssim \begin{cases} n^{2-2s} \qquad &\mbox{if}~0<s<2 \\
n^{-2} \log{n} \qquad &\mbox{if}~s=2\\
n^{1-\frac{3s}{2}} \qquad &\mbox{if}~s > 2.
\end{cases}$$

\subsection{Proof of the Corollary}

\begin{proof} We will again replace the geometric problem: given a set of $n$ great circles on $\mathbb{S}^2$ such that any two intersect at an angle $\gtrsim 1/\sqrt{n}$,
we can associate to each great circle a 'north pole
 (for this argument it is not important which of the two possible points is chosen but we are only allowed to pick one). As before, when constructing sharp
examples, we see that
$$ \sum_{i, j = 1 \atop i \neq j}^{n}{|C_i \cap C_j|} \lesssim \frac{1}{n^{2}}\sum_{i, j = 1 \atop i \neq j}^{n}{\frac{1}{\min  \|  \pm p_i \pm  p_j \| }} \lesssim 1.$$
Let $\chi_{C_i}$ to denote the characteristic function of $C_i$ and
$ C = \bigcup_{i=1}^{n}{C_i}.$
We see with Cauchy-Schwarz
\begin{align*}
 1 = n \cdot \frac{1}{n} \sim \sum_{i=1}^{n}{|C_i|} &= \int_{\mathbb{S}^2}{ \sum_{i= 1}^{n}{\chi_{C_i}} dx} =  \int_{C}{ \sum_{i= 1}^{n}{\chi_{C_i}} dx}  \sim   \left(\int_{C}{  \sum_{i= 1}^{n}{\chi_{C_i}} dx}\right)^2 \\
&\leq  |C|  \int_{C}{ \left( \sum_{i= 1}^{n}{\chi_{C_i}} \right)^2 dx} = |C| \left( 1 +  \sum_{i, j = 1 \atop i \neq j}^{n}{|C_i \cap C_j|}  \right) \lesssim |C|.
\end{align*}
\end{proof}

\subsection{The optimal configuration.}  We have not yet adressed the question of optimal configurations for fixed $n$. Given the intimate connection to Thomson's problem and its
obvious intrinsic geometric significance, the case $s=1$
seems most natural to study and our main result implies that
$$ c_1 \leq \inf_{n \in \mathbb{N}} \inf_{C_1, \dots, C_n} \sum_{i, j =1 \atop i \neq j}^{n}{|C_i \cap C_j|}   \leq \sup_{n \in \mathbb{N}} \inf_{C_1, \dots, C_n} \sum_{i, j =1 \atop i \neq j}^{n}{|C_i \cap C_j|} \leq c_2$$
for some universal constants $0 < c_1 \leq c_2 < \infty$. At this point, we have no nontrivial information about the problem, the constants involved or the structure of minimal configurations.
As already mentioned, understanding the fine structure of configurations minimizing $s-$Riesz energies has been a subject of great interest. In some cases (see, for example, Dahlberg \cite{dahlberg}, Kuijlaars \& Saff \cite{kui})
it is known that the points in the optimal configuration have to have a certain minimum distance between each other and in some cases (G\"otz \& Saff \cite{go}) it is known that they have to be asymptotically uniformly distributed.
The natural analogue for our problem is the question 
$$ \mbox{whether every optimal configuration has to satisfy} \quad \min_{i \neq j}{ \mbox{angle}(C_i, C_j)} \gtrsim \frac{1}{\sqrt{n}}?$$
This seems to be obviously true but nonetheless quite difficult to show: when minimizing the Riesz energy, the arguments incorporate $|x-y|^{-s}$ having a singularity to enforce some regularity whereas no such singularity is  present in the case of the overlap problem.

\subsection{The limit $\delta \rightarrow 0$.}
In this section we quickly derive
$$ \lim_{\delta \rightarrow 0}{ \frac{1}{\delta^2} \sum_{i, j = 1 \atop i \neq j}^{n}{|C_{i,\delta} \cap C_{j,\delta}|^s}} =  \sum_{i, j = 1 \atop i \neq j}^{n}{  
 \frac{1}{   \left(   1 - \left\langle p_i, p_j \right\rangle^2 \right)^{s/2}}   }.$$
\begin{proof}  It is easy to see that $|C_i \cap C_j|$ converges to the behavior on the flat background $\mathbb{R}^2$ as $\delta \rightarrow 0$ whenever
the angle between $C_i$ and $C_j$ is bigger than 0. This immediately implies 
that
$$  \lim_{\delta \rightarrow 0}{ \frac{1}{\delta^2} |C_{i,\delta} \cap C_{j,\delta}|^s} = \frac{1}{\sin{\alpha}},$$
where $\alpha$ is the angle between the two great circles. The angle between the two great circles is the same as the angle between the two poles (assuming they are contained in
the same hemisphere) and since $0 \leq \alpha \leq \pi/2$
$ \left\langle p_i, p_j \right\rangle = \|p_i\|\|p_j\| \cos{\alpha} = \cos{\alpha},$
we get that
$ 1 - \left\langle p_i, p_j \right\rangle^2 = \sin^2{\alpha}.$
This arising equation is automatically invariant under replacing $p_i$ by $-p_i$, which allows us to not distinguish between north poles and south poles.
\end{proof}

\end{document}